\documentclass[10pt]{article}

\usepackage{t1enc}
\usepackage[latin1]{inputenc}
\usepackage[english]{babel}
\usepackage{amsmath,amsthm}
\usepackage{amsfonts}
\usepackage{latexsym}
\usepackage{graphicx}
\usepackage{txfonts}
\usepackage[natural]{xcolor}
\usepackage{dsfont}
\usepackage{lineno}

\newtheorem{thm}{Theorem}
\newtheorem{lem}[thm]{Lemma}
\newtheorem{cor}[thm]{Corollary}

\newtheorem{defn}[thm]{Definition}

\setlength{\textwidth}{4.5in}
\setlength{\textheight}{7.125in}

\makeatletter

\newcommand{\Rmnum}[1]{\expandafter\@slowromancap\romannumeral #1@}
\makeatother

\begin{document}


\title{A note on the vertex arboricity of signed graphs\thanks{This project is supported by the Natural Science Basic Research Plan in Shaanxi Province of China (No.\,2017JM1010), the SRFDP (No.\,20130203120021), the NSFC (No.\,11301410), and the Fundamental Research Funds for the Central Universities (No.\,JB170706).}}
\author{Weichan Liu, Chen Gong, Lifang Wu, Xin Zhang\thanks{Corresponding author. Email address: xzhang@xidian.edu.cn.}\\
{\small School of Mathematics and Statistics, Xidian University, Xi'an 710071, P.\,R.\,China}}
\date{}

\maketitle

\begin{abstract}\baselineskip  0.395cm
A signed tree-coloring of a signed graph $(G,\sigma)$ is a vertex coloring $c$ so that $G^{c}(i,\pm)$ is a forest for every $i\in c(u)$ and $u\in V(G)$, where $G^{c}(i,\pm)$ is the subgraph of $(G,\sigma)$
whose vertex set is the set of vertices colored by $i$ or $-i$ and edge set is the set of positive edges with two end-vertices colored both by $i$ or both by $-i$, along with
the set of negative edges with one end-vertex colored by $i$ and the other colored by $-i$. If $c$ is a function from $V(G)$ to $M_n$, where $M_n$ is $\{\pm 1,\pm 2,\ldots,\pm k\}$ if $n=2k$, and $\{0,\pm 1,\pm 2,\ldots,\pm k\}$ if $n=2k+1$, then $c$ a signed tree-$n$-coloring of $(G,\sigma)$. The minimum integer $n$ such that $(G,\sigma)$ admits a signed tree-$n$-coloring is the signed vertex arboricity of $(G,\sigma)$, denoted by $va(G,\sigma)$. In this paper, we first show that two switching equivalent signed graphs have the same signed vertex arboricity, and then
prove that $va(G,\sigma)\leq 3$ for every balanced signed triangulation and for every edge-maximal $K_5$-minor-free graph with balanced signature. This generalizes the well-known result that the vertex arboricity of every planar graph is at most 3.\\[.2em]
Keywords: signed graph, vertex arboricity, planar graph, $K_5$-minor-free graph
\end{abstract}

\baselineskip  0.395cm

\section{Introduction}

\newcommand{\la}{\mathrm{la}}

In this paper, all graphs are finite, simple and undirected. Let $V(G)$ and $E(G)$ be the vertex set and the edge set of $G$, respectively. We denote by $\Delta(G)$ and $\delta(G)$ the maximum and minimum degree of $G$, respectively.

A \emph{signed graph} is a graph in which each edge has a positive or negative signature. Precisely, a signed graph $(G,\sigma)$ is a graph $G$ with a signature $\sigma: E(G)\rightarrow \{1,-1\}$. Edges with signature $1$ are \emph{positive} edges while edges with signature $-1$ are \emph{negative} edges.
A graph with no signature is usually called an \emph{unsigned graph}.
Signed graphs appeared first in a mathematical paper of Harary \cite{Harary} in 1955, and have been rediscovered many times because they come up naturally in many unrelated areas \cite{Zaslavsky1998,Zaslavsky2012}. For example, signed graphs have been used in social psychology \cite{CH} to model social situations, with positive edges representing friendships and negative edges representing enmities between nodes, which represent people.

This paper focuses in the coloring problems of sighed graphs. First, let us look at the proper coloring of a signed graph. Actully, there are two definitions on this in the history.

In an early paper, Zaslavsky \cite{Zaslavsky1882}
defines a \emph{(signed) coloring} of a signed graph $(G,\sigma)$ with $k$ colors or with $2k+1$ signed colors to be a mapping  $c:V(G)\rightarrow\{-k,-(k-1),\ldots,-1,0,1,\ldots,(k-1),k\}$ such that
for every edge $uv\in E(G)$, $c(u)\neq c(v)$ if $\sigma(uv)=1$, and $c(u)\neq -c(v)$ if $\sigma(uv)=-1$. Recently in 2016, M\'a\v cajov\'a, Raspaud and \v Skoviera \cite{MRS} pointed out that Zaslavsky's definition has disadvantage that this definition for signed graphs does not directly transfer from the definition for unsigned graphs. Thereby, they diverged from the above definition adopted by Zaslavsky and proposed the following definition, which aligns the definitions for both unsigned and signed versions.

Let $M_n\subseteq \mathds{Z}$, where $M_n$ is $\{\pm 1,\pm 2,\ldots,\pm k\}$ if $n=2k$, and $\{0,\pm 1,\pm 2,\ldots,\pm k\}$ if $n=2k+1$. A mapping $c:V(G)\rightarrow M_n$ such that
$c(u)\neq \sigma(uv)c(v)$ for every edge $uv\in E(G)$ is a \emph{(signed) $n$-coloring} of a signed graph $(G,\sigma)$. The \emph{signed chromatic number} of $(G,\sigma)$, denoted by $\chi(G,\sigma)$, is the smallest integer $n$ such that $(G,\sigma)$ admits an $n$-coloring.

Define the all-positive signed graph as having all positive edges. Since the signed coloring rules for such a graph are equivalent to the ones for an unsigned graph, its signed chromatic number is the same with its (unsigned) chromatic number. Hence the proper coloring of signed graphs generalizes the one of unsigned graphs. In view of this, it is natural to generalize the improper coloring of unsigned graphs to its signed version.

A \emph{tree-$k$-coloring} of an unsigned graph $G$ is a function $\varphi$ from the vertex set $V(G)$ to $\{1,2,\ldots,k\}$ so that the graph induced by $\varphi^{-1}(i)$ is a union of trees for every $1\leq i\leq k$. The minimum integer $k$ so that $G$ admits a tree-$k$-coloring is the \emph{vertex arboricity} of $G$, denoted by $va(G)$. The notion of vertex arboricity was introduced by Chartrand, Kronk and Wall \cite{CKW} in 1968.

The aim of this paper is to investigate the vertex arboricity of signed graphs, i.e., investigate the signed version of the vertex arboricity.
Before doing this, we shall properly define the tree-coloring of a signed graph.
In order to complete this work, we first introduce an equivalent definition
for the (signed) $n$-coloring of a signed graph $(G,\sigma)$.

Mapping the vertex set $V(G)$ of a signed graph $(G,\sigma)$ into $M_n$, we then obtain a (signed) $n$-coloring $c$ of $(G,\sigma)$.

Let $G^{c}(i,\pm)$ be the subgraph of $(G,\sigma)$
whose vertex set is the set of vertices colored by $i$ or $-i$ and edge set is the set of positive edges with two end-vertices colored both by $i$ or both by $-i$, along with
the set of negative edges with one end-vertex colored by $i$ and the other colored by $-i$. Clearly, $G^{c}(i,\pm)=G^{c}(-i,\pm)$.


By the definitions of $G^{c}(i,\pm)$, one can easily conclude that $c$ is a proper coloring of $(G,\sigma)$ if and only if $G^{c}(i,\pm)$ is an empty graph (i.e., graph with no edge) for every $i\in M_n$.
Follow this idea, we can naturally define one another kind of coloring of the signed graph $(G,\sigma)$ as follows.

\begin{defn}
A signed tree-coloring of $(G,\sigma)$ is a vertex coloring $c$ so that $G^{c}(i,\pm)$ is a forest for every $i\in c(u)$ and $u\in V(G)$. If $c$ is a function from $V(G)$ to $M_n$, then we call $c$ a (signed) tree-$n$-coloring of $(G,\sigma)$. The minimum integer $n$ such that $(G,\sigma)$ admits a signed tree-$n$-coloring is the signed vertex arboricity of $(G,\sigma)$, denoted by $va(G,\sigma)$.
\end{defn}


\section{Main results and their proofs}

\emph{Switching} a vertex in a signed graph $(G,\sigma)$ means negating the signs of all the edges incident with that vertex. Switching a set of vertices means negating all the edges that have one end in that set and one end in the complementary set. Switching a series of vertices, once each, is the same as switching the whole set at once.

If a signed graph $(G,\sigma')$ is obtained from a signed graph $(G,\sigma)$ by a series of switchings, then we say that $(G,\sigma')$ and $(G,\sigma)$ are \emph{switching equivalent}.

\begin{thm}\label{switching}
  If $(G,\sigma)$ and $(G,\sigma')$ are switching equivalent, then $(G,\sigma)$ is tree-$n$-colorable iff $(G,\sigma')$ is tree-$n$-colorable, hence $va(G,\sigma)=va(G,\sigma')$.
\end{thm}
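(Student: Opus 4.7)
The plan is to reduce the theorem to the case of a single-vertex switching and then exhibit a color-negation bijection between tree-$n$-colorings of $(G,\sigma)$ and of $(G,\sigma')$. Because switching a set of vertices is the same as switching its vertices one at a time (as noted in the definition paragraph), a straightforward induction on the number of switched vertices reduces the problem to showing: if $(G,\sigma')$ is obtained from $(G,\sigma)$ by switching a single vertex $v_0$, then $(G,\sigma)$ is tree-$n$-colorable iff $(G,\sigma')$ is tree-$n$-colorable.

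Given a tree-$n$-coloring $c$ of $(G,\sigma)$, I would define $c':V(G)\to M_n$ by $c'(v_0)=-c(v_0)$ and $c'(v)=c(v)$ for all $v\neq v_0$. The key claim is that $(G,\sigma)^{c}(i,\pm)$ and $(G,\sigma')^{c'}(i,\pm)$ coincide as subgraphs for every $i\in M_n$. For the vertex sets, the set $\{i,-i\}$ is closed under negation, so $c'(v)\in\{i,-i\}$ iff $c(v)\in\{i,-i\}$, whether or not $v=v_0$. For the edges I would rephrase the membership condition uniformly: an edge $uv$ with $c(u),c(v)\in\{i,-i\}$ belongs to $G^{c}(i,\pm)$ iff $c(u)=\sigma(uv)\,c(v)$ (the positive case forces $c(u)=c(v)$ and the negative case forces $c(u)=-c(v)$). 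Edges not incident with $v_0$ keep both their sign and their endpoint colors, so their status is unchanged. For an edge $uv_0$ the two negations cancel: the condition $c'(u)=\sigma'(uv_0)\,c'(v_0)$ becomes $c(u)=(-\sigma(uv_0))(-c(v_0))=\sigma(uv_0)\,c(v_0)$, so membership in $(G,\sigma')^{c'}(i,\pm)$ is equivalent to membership in $(G,\sigma)^{c}(i,\pm)$.

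Once the identification $(G,\sigma)^{c}(i,\pm)=(G,\sigma')^{c'}(i,\pm)$ is secured, each such subgraph is a forest under $c$ iff it is a forest under $c'$, so $c'$ is a tree-$n$-coloring of $(G,\sigma')$. The reverse direction is symmetric: switching $v_0$ in $(G,\sigma')$ recovers $(G,\sigma)$, so the same construction converts tree-$n$-colorings of $(G,\sigma')$ into tree-$n$-colorings of $(G,\sigma)$. Hence $(G,\sigma)$ and $(G,\sigma')$ have the same tree-$n$-colorability for every $n$, and the equality $va(G,\sigma)=va(G,\sigma')$ follows at once by taking $n$ to be the minimum such value on either side.

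I do not anticipate a genuine obstacle; the only point that demands care is the sign bookkeeping at edges incident with $v_0$, where it is essential that the negation of the edge sign and the negation of the color at $v_0$ precisely cancel each other, and a clean uniform reformulation of the edge-membership condition as $c(u)=\sigma(uv)c(v)$ is what makes this cancellation transparent.
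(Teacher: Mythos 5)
Your proposal is correct and follows essentially the same route as the paper: reduce to switching a single vertex and negate the color of that vertex, with the sign-flip and color-flip cancelling on incident edges. The only difference is cosmetic — you verify the result by showing the subgraphs $G^{c}(i,\pm)$ and $G^{c'}(i,\pm)$ coincide outright (via the uniform membership condition $c(u)=\sigma(uv)c(v)$), whereas the paper argues by contradiction that a cycle in one would yield a cycle in the other; your version is the cleaner of the two.
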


\begin{proof}
  We just need prove this result if $(G,\sigma')$ is obtained from $(G,\sigma)$ by switching one vertex $u$. Let $c$ be a tree-$n$-coloring of $(G,\sigma)$ and let $c(u)=i$. In what follows we claim that if we recolor $u$ by $-i$ then we would get a tree-$n$-coloring $c'$ of $(G,\sigma')$

  If $j\in M_n\setminus \{i,-i\}$, then $G^{c'}(j,\pm)$ is a forest, since any edge incident with $u$ is not contained in $G^{c'}(j,\pm)$ and thus $G^{c'}(j,\pm)=G^{c}(j,\pm)$.

  If $j=-i$, then we suppose, to the contrary, that $G^{c'}(-i,\pm)$ contains a cycle $C$. If $u\not\in V(C)$, then any edge incident with $u$ is not an edge of $C$ and thus $C$ is cycle in $G^c(-i,\pm)$, a contradiction. Hence we assume that $uv,uw\in E(C)$. Let $P$ be path derived from $C$ by deleting $uv$ and $uw$. Since $c(x)=c'(x)$ for every $x\in V(P)$ and $\sigma(e)=\sigma'(e)$ for every $e\in E(P)$, $P$ is a subgraph of $G^c(-i,\pm)$.
  By the definition of $G^{c'}(-i,\pm)$, we have $c'(v)=-i\sigma'(uv)$ and $c'(w)=-i\sigma'(uw)$. Note that $-i=c'(u)$. Since $-\sigma'(uv)=\sigma(uv)$ and $-\sigma'(uw)=\sigma(uw)$, $c(v)=c'(v)=i\sigma(uv)=c(u)\sigma(uv)$ and $c(w)=c'(w)=i\sigma(uw)=c(u)\sigma(uw)$. This implies that $uv$ and $uw$ are two edges in $G^c(-i,\pm)$. Therefore, $C$ becomes a cycle in $G^c(-i,\pm)$, a contradiction.
  Hence $G^{c'}(j,\pm)=G^{c'}(-i,\pm)$ is a forest.

 If $j=i$, then $G^{c'}(i,\pm)=G^{c'}(-i,\pm)$ is a forest by the above result.


 In conclusion, we prove that $G^{c'}(j,\pm)$ is a forest for any $j\in M_n$. Therefore, $c'$ is a tree-$n$-coloring of $(G,\sigma')$.
\end{proof}

Actually, Theorem \ref{switching} grantees that Definition 1 is well-defined.

A cycle of a signed graph $(G,\sigma)$ is \emph{positive} if it has an even number of negative edges. A singed graph is said to be \emph{balanced} if all of its cycles are positive.

By $G_1\cap G_2$ (resp.\,$G_1\cup G_2$), we denote the graph with vertex set $V(G_1)\cap V(G_2)$ (resp.\,$V(G_1)\cup V(G_2)$) and edge set $E(G_1)\cap E(G_2)$ (resp.\,$E(G_1)\cup E(G_2)$).

\begin{lem}\label{comb-lem}
Let $(G_1,\sigma_1)$ and $(G_2,\sigma_2)$ be balanced signed graphs such that $\sigma_1(e)=\sigma_2(e)$ if $e\in E(G_1)\cap E(G_2)$, and let $G=G_1\cup G_2$ be a graph with signature $\sigma$ such that $\sigma(e)=\sigma_1(e)$ if $e\in E(G_1)$ and $\sigma(e)=\sigma_2(e)$ if $e\in E(G_2)$. Let $c_1$ and $c_2$ be signed tree-colorings of $(G_1,\sigma_1)$ and $(G_2,\sigma_2)$, respectively, such that $c_1(v)=c_2(v)$ for any $v\in V(G_1)\cap V(G_2)$.
If $G_1\cap G_2=K_2$ or $K_3$, then combining $c_1$ with $c_2$ we obtain a signed tree-coloring $c$ of $(G,\sigma)$.
\end{lem}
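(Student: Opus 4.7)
My plan is to define the combined coloring $c$ in the obvious way, by setting $c(v) = c_1(v)$ for $v \in V(G_1)$ and $c(v) = c_2(v)$ for $v \in V(G_2) \setminus V(G_1)$; this is a well-defined vertex coloring because $c_1 \equiv c_2$ on $V(G_1) \cap V(G_2)$. The remaining task is to verify that $G^c(i,\pm)$ is a forest for every color $i$ appearing in the image of $c$. I would proceed by contradiction: fix such an $i$ and assume $G^c(i,\pm)$ contains a cycle $C$.

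The first observation is that $C$ cannot lie entirely in $G_1$ (and symmetrically not in $G_2$): since the restrictions of $c$ and $\sigma$ to $G_1$ coincide with $c_1$ and $\sigma_1$, such a $C$ would be a cycle in the forest $G_1^{c_1}(i,\pm)$. Hence $C$ must visit both $V(G_1) \setminus V(G_2)$ and $V(G_2) \setminus V(G_1)$, and therefore meets $V(G_1) \cap V(G_2)$ in at least two vertices. I would then pick two such vertices $u, v$ that are consecutive along $C$, so the arc $P$ of $C$ between them contains no other overlap vertex; all internal vertices of $P$ then lie on one side, say in $V(G_1) \setminus V(G_2)$, which forces $P \subseteq G_1$. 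Since $G_1 \cap G_2$ is $K_2$ or $K_3$, the vertices $u$ and $v$ are joined by an edge $uv \in E(G_1) \cap E(G_2)$.

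The crux is to show that $P \cup \{uv\}$ is a cycle in $G_1^{c_1}(i,\pm)$, which directly contradicts the forest property there. The edges of $P$ automatically lie in $G_1^{c_1}(i,\pm)$, so the only point to verify is the sign-color compatibility of the chord $uv$. Here I invoke the balance of $(G_1,\sigma_1)$: tracking how the $\{i,-i\}$-labeling evolves along $P$ (preserved at positive edges, flipped at negative ones) yields $c_1(u)c_1(v)/i^2 = \prod_{e \in P} \sigma(e)$, while balance applied to the cycle $P \cup \{uv\}$ in $G_1$ gives $\sigma(uv) = \prod_{e \in P} \sigma(e)$. Combining these identities shows $c_1(u)c_1(v) = i^2\sigma(uv)$, which is precisely the condition for $uv$ to belong to $G_1^{c_1}(i,\pm)$.

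The main hurdle I anticipate is bookkeeping in the $K_3$ case when $C$ meets all three overlap vertices and thus splits into three arcs rather than two, as well as the mild nuisance that $P$ might have length one (in which case $uv$ itself is already on $C$, and $c_1(u)c_1(v)/i^2 = \sigma(uv)$ follows immediately from $uv \in G^c(i,\pm)$). My sketch sidesteps these by always isolating a single consecutive pair of overlap vertices on $C$; because any two vertices of $G_1 \cap G_2$ are adjacent, the $K_3$ case reduces uniformly to the $K_2$ argument with no genuinely new ingredient.
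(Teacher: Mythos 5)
Your proposal is correct and follows essentially the same route as the paper: a hypothetical cycle in $G^c(i,\pm)$ must cross the overlap clique, an arc of it together with a clique edge $uv$ forms a cycle of $G_1$, and the sign-parity identity along the arc combined with balance yields the contradiction. The only difference is cosmetic — the paper first forces $\sigma(uv)$ so that $uv\notin G_1^{c_1}(i,\pm)$ and then contradicts balance, while you use balance to force $uv\in G_1^{c_1}(i,\pm)$ and contradict the tree-coloring of $G_1$; the underlying computation is identical.
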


\begin{proof}
Fist, we consider the case that $G_1\cap G_2=K_2=:xy$. One can image that $G_1$ is drawn on one side of the edge $xy$ while $G_2$ is drawn on the other side of the edge $xy$, so $xy$ is non-crossed in this drawing.

If $c$ is not a signed tree-coloring of $(G,\sigma)$, then $G^c(i,\pm)$ contains a cycle $C$ for some color $i$.
Clearly, $x,y\in V(C)$, $c(x),c(y)\in\{i,-i\}$, and the graph $C_1$ and $C_2$ induced by $(E(C)\cap E(G_1))\cup \{xy\}$ and $(E(C)\cap E(G_2))\cup \{xy\}$ are cycles, respectively.
Otherwise, $G^{c_1}_1(i,\pm)$ or $G^{c_2}_2(i,\pm)$ contains a cycle, a contradiction.

By symmetry, we consider the following two cases.

If $c(x)=c(y)=i$, then $\sigma(xy)=-1$, because otherwise $C_1$ is a cycle in $G_1^{c}(i,\pm)$, a contradiction. On the other hand, since $C_1-xy$ is a path with $c(x)=c(y)=i$, the edge of which is positive iff its end-vertices are colored both by $i$ or both by $-i$, and is negative iff one of its end-vertices is colored by $i$ and the other is colored by $-i$, there is an even number of negative edges in $C_1-xy$. Hence when $\sigma(xy)=-1$, there is an odd number of negative edges in the cycle $C_1$, contradicting the fact that every cycle of $(G_1,\sigma_1)$ is positive.

If $c(x)=i$ and $c(y)=-i$, then $\sigma(xy)=1$, because otherwise $C_1$ is a cycle in $G_1^{c}(i,\pm)$, a contradiction. On the other hand, since $C_1-xy$ is a path with $c(v_i)=i$ and $c(v_j)=-i$, the edge of which is positive iff its end-vertices are colored both by $i$ or both by $-i$, and is negative iff one of its end-vertices is colored by $i$ and the other is colored by $-i$, there is an odd number of negative edges in $C_1-xy$. Hence there is an odd number of negative edges in the cycle $C_1$, contradicting the fact that every cycle of $(G_1,\sigma_1)$ is positive.

Second, we consider the case that $G_1\cap G_2=K_3:=T:=xyz$. One can image that $G_1$ is drawn in the interior of the triangle $T$ while $G_2$ is drawn in the exterior of the triangle $T$, so $xy,yz,zx$ are non-crossed in this drawing.

Actually we can prove that $G^c(i,\pm)$ is a forest for every $i\in c(u)$ and $u\in V(G)$.
Suppose, to the contrary, that $G^c(i,\pm)$ contains a cycle $C$ for some color $i$. One can easily conclude that there is an edge among $\{xy,yz,zx\}$, say $xy$, and then a path $P$ on $C$ initiated from $x$ and ended with $y$, so that $P+xy$ is a cycle in $G_1$, and meanwhile, $c(x),c(y)\in \{i,-i\}$, because otherwise $G_1^{c_1}(i,\pm)$ or $G_2^{c_2}(i,\pm)$ contains a cycle, a contradiction. At this stage, using the same arguments as the ones applied for the first major case, we would obtain a contradiction to the fact that $(G_1,\sigma_1)$ is a balanced signed graph.
\end{proof}

A plane graph is a \emph{near-triangulation} if the boundary of every face, except possibly the outer face, is a cycle on three vertices, and is \emph{triangulation} if the boundary of every face is a cycle on three vertices.
Clearly, every triangulation is near-triangulation.

\begin{thm}\label{near-triangulation-outerface}
Let $(G,\sigma)$ be a balanced signed graph and let $G$ be a near-triangulation with outer face $C=v_1v_2\ldots v_nv_1$. If there is a list $L(v)$ of colors to every vertex $v$ in $G$ so that
$L(v_1)=\{\alpha\}$, $L(v_2)=\{\beta\}$, $|L(v_k)|\geq 2$ for every $v_k\in V(C)\setminus \{v_1,v_2\}$ and $|L(u)|\geq 3$ for every $u\in V(G)\setminus V(C)$, then there is a signed tree-coloring $c$ of $(G,\sigma)$ so that $c(u)\in L(u)$ for every $u\in V(G)$.
\end{thm}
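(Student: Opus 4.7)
The plan is to prove this by induction on $|V(G)|$, following the Thomassen-style scheme for list-colorings of near-triangulations, adapted to signed tree-colorings. For the base case $|V(G)|=3$, $G$ is the triangle $v_1v_2v_3$ with $L(v_1)=\{\alpha\}$, $L(v_2)=\{\beta\}$ and $|L(v_3)|\ge 2$; the only potential obstruction is that the triangle lies entirely inside some $G^c(i,\pm)$, and a short case check using the definition of $G^c(i,\pm)$ and the balance of the triangle shows that at most one choice of $c(v_3)\in L(v_3)$ can cause this, so a valid $c(v_3)$ always exists.

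For the inductive step, assume $|V(G)|\ge 4$, and split on whether $C$ has a chord. If $v_pv_q$ is a chord of $C$, it cuts $G$ into two near-triangulations $G_1$ and $G_2$ with $G_1\cap G_2=K_2$ being the chord, and we may assume without loss of generality that $v_1v_2\in E(G_1)$. I would apply the induction hypothesis to $(G_1,\sigma|_{G_1})$ with the inherited lists ($v_1,v_2$ still serve as the precolored pair, and $|L(v_p)|,|L(v_q)|\ge 2$) to obtain a signed tree-coloring $c_1$. Then I would apply the induction hypothesis to $(G_2,\sigma|_{G_2})$ with the modified lists $L'(v_p)=\{c_1(v_p)\}$, $L'(v_q)=\{c_1(v_q)\}$ and $L'=L$ elsewhere, using $v_p,v_q$ as the precolored pair, to obtain a signed tree-coloring $c_2$ that agrees with $c_1$ on $\{v_p,v_q\}$. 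Lemma~\ref{comb-lem} then glues $c_1$ and $c_2$ into a signed tree-coloring of $(G,\sigma)$.

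Suppose instead $C$ has no chord. Then $v_n$ has at least one interior neighbor, and I list the neighbors of $v_n$ in their planar rotation as $v_1,u_1,u_2,\ldots,u_m,v_{n-1}$ with $m\ge 1$ and $v_1u_1u_2\ldots u_mv_{n-1}$ a path of $G$ (the link of $v_n$). Using $|L(v_n)|\ge 2$, pick $\gamma\in L(v_n)$ with $\gamma\ne\sigma(v_nv_1)\alpha$; the point of this choice is that once we set $c(v_n)=\gamma$, we will have $c(v_1)=\alpha\ne\sigma(v_nv_1)\gamma$, so $v_nv_1\notin G^c(\gamma,\pm)$. Now let $G'=G-v_n$: this is a near-triangulation with outer cycle $v_1v_2\ldots v_{n-1}u_mu_{m-1}\ldots u_1v_1$, and $(G',\sigma|_{G'})$ is still balanced. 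Define lists on $G'$ by $L'(u_i)=L(u_i)\setminus\{\sigma(v_nu_i)\gamma\}$ (of size $\ge 2$) and $L'(v)=L(v)$ for every other vertex $v$. The hypothesis of the theorem is satisfied for $G'$ with the same precolored pair $v_1,v_2$, so the induction hypothesis produces a signed tree-coloring $c'$ of $(G',\sigma|_{G'})$ respecting $L'$, which I extend to $c$ on $G$ by $c(v_n)=\gamma$.

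It remains to verify that $c$ is a signed tree-coloring. For $j\notin\{\gamma,-\gamma\}$ the subgraph $G^c(j,\pm)={G'}^{c'}(j,\pm)$ is a forest by induction. For $j=\gamma$, the only neighbor $w$ of $v_n$ that can satisfy $c(w)=\sigma(v_nw)\gamma$ is $v_{n-1}$---the choice of $\gamma$ rules out $v_1$, and the list reduction rules out every $u_i$---so $v_n$ attaches to the forest ${G'}^{c'}(\gamma,\pm)$ either as an isolated vertex or as a leaf, preserving the forest property. I expect the main obstacle to lie in this chord-free case: we cannot afford to shrink $L(v_{n-1})$ without breaking the induction hypothesis, so $\gamma$ has to be chosen so that the precolored $v_1$ stays safe, and the saving insight is that leaving $v_{n-1}$ free is harmless, since a single bad neighbor of $v_n$ only introduces a pendant edge.
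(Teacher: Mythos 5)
Your proposal is correct and follows essentially the same route as the paper's proof: induction with a chord-splitting case handled via Lemma~\ref{comb-lem}, and a chord-free case in which $v_n$ is deleted, a color $\gamma\in L(v_n)$ avoiding $\alpha\sigma(v_1v_n)$ is reserved, and the lists of the interior neighbors $u_i$ are reduced by $\gamma\sigma(v_nu_i)$ so that $v_n$ attaches to $G^{c}(\gamma,\pm)$ as at most a pendant vertex through $v_{n-1}$. The only difference is cosmetic: you make the base case and the leaf/isolated-vertex verification explicit, which the paper leaves implicit.
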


\begin{proof}
We prove it by the induction on $n$.
If there is an edge $v_iv_j\in E(G)$ with $2\leq j-i\leq n-1$, then let $(G_1,\sigma)$ be the signed graph induced by the vertices on and inside the cycle $C_1=v_iv_{i+1}\ldots v_jv_i$ and let $(G_2,\sigma)$ be the signed graph induced by the vertices on and inside the cycle $C_2=v_jv_{j+1}\ldots v_{i}v_j$. Without loss of generality, assume that $\{v_1,v_2\}\subseteq V(G_1)$. Clearly, $G_1$ and $G_2$ are near-triangulations with outer face $C_1$ and $C_2$, respectively.

Since $L(v_1)=\{\alpha\}$, $L(v_2)=\{\beta\}$, $|L(v_k)|\geq 2$ for every $v_k\in V(C_1)\setminus \{v_1,v_2\}$ and $|L(u)|\geq 3$ for every $u\in V(G_1)\setminus V(C_1)$, there is a signed tree-coloring $c_1$ of $(G_1,\sigma)$ so that $c_1(u)\in L(u)$ for every $u\in V(G_1)$ by the induction hypothesis. Now $v_i$ and $v_j$ have been colored with $\alpha':=c(v_i)$ and $\beta':=c(v_j)$, respectively.

Define a list $L'(v)$ of colors to every vertex $v$ in $G_2$ so that $L'(v_i)=\{\alpha'\}$, $L'(v_j)=\{\beta'\}$ and $L'(u)=L(u)$ for every $u\in V(G_2)\setminus \{v_i,v_j\}$. Since
$|L'(v_k)|=|L(v_k)|\geq 2$ for every $v_k\in V(C_2)\setminus \{v_i,v_j\}$ and $|L'(u)|=|L(u)|\geq 3$ for every $u\in V(G_2)\setminus V(C_2)$, there is a signed tree-coloring $c_2$ of $(G_2,\sigma)$ so that $c_2(u)\in L'(u)$ for every $u\in V(G_2)$.

Combining $c_1$ with $c_2$, we obtain a signed tree-coloring $c$ of $(G,\sigma)$ so that $c(u)\in L(u)$ for every $u\in V(G)$ by Lemma \ref{comb-lem}.

Hence in the following we assume that there is no such an edge $v_iv_j\in E(G)$ with $2\leq j-i\leq n-1$. 
Let $v_1,u_1,u_2,\ldots,u_t,v_{n-1}$ be neighbors of $v_n$ in cyclic order around $v_n$. By the previous assumption, we have $v_1u_1,u_1u_2,\ldots,u_{t-1}u_t,u_tv_{n-1}\in E(G)$, since $G$ is a near-triangulation. Hence $G^*=G-v_n$ is a near-triangulation with outer face $C=v_1v_2\ldots v_{n-1}u_t\ldots u_2u_1v_1$.
Let $\gamma$ be a color in $L(v_n)$ that is different from $\alpha\sigma(v_1v_n)$. Define a list $L^*(v)$ of colors to every vertex $v$ in $G^*$ so that $L^*(u_i)=L(u_i)\setminus \{\gamma\sigma(v_nu_i)\}$ for every $1\leq i\leq t$ and $L^*(v)=L(v)$ for every $v\in V(G^*)\setminus \{u_1,u_2,\ldots, u_t\}$. Since $|L(u_i)|\geq 3$ for every $1\leq i\leq t$, $|L^*(u_i)|\geq 2$ for every $1\leq i\leq t$. Hence we can apply the induction hypothesis to $G^*$ and conclude that there is a signed tree-coloring $c^*$ of $(G^*,\sigma)$ so that $c^*(u)\in L^*(u)\subseteq L(u)$ for every $u\in V(G^*)$. Now we complete the coloring $c$ of $(G,\sigma)$ by coloring $v_n$ with $\gamma$.
Since $\gamma\neq c^*(v_1)\sigma(v_1v_n)$ and $\gamma\neq c^*(u_i)\sigma(v_nu_i)$ for every $1\leq i\leq t$, $G^c(\gamma,\pm)$ is a forest.
Note that we would not mind whether $\gamma$ is equal to $c^*(v_{n-1})\sigma(v_{n-1}v_n)$ or not.
Hence $c$ is a signed tree-coloring of $(G,\sigma)$ so that $c(u)\in L(u)$ for every $u\in V(G)$.
\end{proof}

Actually, Theorem \ref{near-triangulation-outerface} directly implies the following

\begin{cor}
If $(G,\sigma)$ is a balanced signed triangulation, then $va(G,\sigma)\leq 3$. \hfill$\square$
\end{cor}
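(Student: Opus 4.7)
The corollary is essentially a direct specialization of Theorem \ref{near-triangulation-outerface}, so my plan is to set up exactly the right list assignment and invoke that theorem.

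First I would observe that every triangulation is, by definition, a near-triangulation whose outer face is itself a triangle, say $C=v_1v_2v_3$. The unsigned tree-coloring palette I want to use is $M_3=\{0,+1,-1\}$, which has exactly three elements; exhibiting a signed tree-coloring $c:V(G)\to M_3$ will prove $va(G,\sigma)\leq 3$.

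Next I would define a list assignment $L$ on $V(G)$ by choosing any two distinct colors $\alpha,\beta\in M_3$, setting $L(v_1)=\{\alpha\}$ and $L(v_2)=\{\beta\}$, then letting $L(v_3)$ be any $2$-element subset of $M_3$ (for instance $M_3\setminus\{\alpha\}$), and finally setting $L(u)=M_3$ for every interior vertex $u\in V(G)\setminus\{v_1,v_2,v_3\}$. This assignment satisfies the hypotheses of Theorem \ref{near-triangulation-outerface}: $v_1$ and $v_2$ have singleton lists, the remaining outer vertex $v_3$ has a list of size $\geq 2$, and every interior vertex has a list of size $\geq 3$.

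Applying the theorem then yields a signed tree-coloring $c$ of $(G,\sigma)$ with $c(u)\in L(u)\subseteq M_3$ for every $u\in V(G)$. Such a $c$ is by definition a signed tree-$3$-coloring, so $va(G,\sigma)\leq 3$, which is what we needed. There is no real obstacle here: the only subtlety is making sure the chosen lists really are contained in $M_3$ and have the prescribed sizes, and that a triangulation qualifies as a near-triangulation so that the theorem applies verbatim.
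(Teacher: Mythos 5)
Your proposal is correct and is exactly the intended derivation: the paper presents this corollary as a direct consequence of Theorem \ref{near-triangulation-outerface}, obtained by viewing the triangulation as a near-triangulation with triangular outer face and feeding in lists drawn from $M_3$ of sizes $1,1,2$ on the outer triangle and $3$ on the interior vertices. No further comment is needed.
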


Since all-positive signed graph is balanced and the signed tree-coloring for such a graph $(G,\sigma)$ is equivalent to the tree-coloring for the unsigned graph $G$, the signed vertex arboricity of $(G,\sigma)$ is the same with the (unsigned) vertex arboricity of $G$. Hence we have

\begin{cor}
If $G$ is a triangulation, then $va(G)\leq 3$. \hfill$\square$
\end{cor}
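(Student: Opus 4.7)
The plan is to reduce this corollary to the previous one by equipping $G$ with the all-positive signature. I would define $\sigma_+ : E(G) \to \{1,-1\}$ by $\sigma_+(e) = 1$ for every edge $e$. Every cycle of $(G, \sigma_+)$ contains zero negative edges and zero is even, so $(G, \sigma_+)$ is balanced. If $G$ is a triangulation, then $(G, \sigma_+)$ is a balanced signed triangulation, and the previous corollary immediately yields $va(G, \sigma_+) \leq 3$.

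Next I would verify that $va(G) \leq va(G, \sigma_+)$ by showing that any signed tree-$n$-coloring of $(G, \sigma_+)$ is already a tree-$n$-coloring of $G$ in the classical sense. Let $c : V(G) \to M_n$ be a signed tree-$n$-coloring. Because $\sigma_+ \equiv 1$, no negative edge ever appears in any $G^c(i,\pm)$, and a positive edge $uv$ lies in $G^c(i,\pm)$ if and only if $c(u) = c(v) \in \{i,-i\}$. Hence $G^c(i,\pm)$ is the vertex-disjoint union of the subgraphs of $G$ induced by $c^{-1}(i)$ and by $c^{-1}(-i)$; its being a forest forces each of these two induced subgraphs to be a forest separately. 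Since $|M_n| = n$, partitioning $V(G)$ according to the values of $c$ gives a partition of $V(G)$ into $n$ classes each inducing a forest, i.e., a tree-$n$-coloring of $G$. Combining the two facts yields $va(G) \leq va(G, \sigma_+) \leq 3$.

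The main (mild) obstacle is only the bookkeeping around the color classes: one must note that in the all-positive case the labels $i$ and $-i$ function as two independent forest classes rather than being identified, so that the count $n$ measures the same quantity on both the signed and the unsigned side. Beyond this observation, the result is a direct specialization of the signed statement and requires no further argument.
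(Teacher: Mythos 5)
Your proposal is correct and follows the same route as the paper: the paper likewise equips $G$ with the all-positive signature, observes that signed tree-colorings of an all-positive balanced signed graph coincide with ordinary tree-colorings of $G$, and specializes the preceding corollary. Your write-up merely makes explicit the (correct) bookkeeping that the paper leaves as an assertion, namely that $G^c(i,\pm)$ splits as the disjoint union of the subgraphs induced by $c^{-1}(i)$ and $c^{-1}(-i)$.
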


Specially, since every planar graph is a subgraph of a triangulation and $va(G')\leq va(G)$ if $G'\subseteq G$, we deduce a well-known result contributed by Chartrand, Kronk and Wall \cite{CKW}.

\begin{cor}\label{va-planar}
If $G$ is a planar graph, then $va(G)\leq 3$. \hfill$\square$
\end{cor}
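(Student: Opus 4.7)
The plan is to reduce the statement to the preceding corollary (triangulations have vertex arboricity at most $3$) by means of two standard facts: every planar graph embeds as a subgraph of a planar triangulation, and vertex arboricity is monotone under subgraphs.

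First I would fix a planar embedding of $G$ and complete it to a triangulation. If some face of the embedding is bounded by a closed walk of length at least $4$, or is bounded by more than one component, then one can insert a new edge joining two non-adjacent vertices on the boundary of that face (or in different boundary components) without introducing crossings and without creating multi-edges; any pair of such vertices visible across the face will do. Iterating this process is finite because each step strictly decreases $2|E(G)|-3|V(G)|+\text{const}$, and it terminates at a plane graph $G^{*}$ in which every face is a triangle. By construction $G^{*}$ is a triangulation with $V(G^{*})=V(G)$ and $E(G)\subseteq E(G^{*})$.

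Next I would verify monotonicity: if $H\subseteq G$ and $\varphi$ is a tree-$k$-coloring of $G$, then the restriction $\varphi|_{V(H)}$ is a tree-$k$-coloring of $H$, because for each color $i$ the subgraph of $H$ induced by $\varphi^{-1}(i)\cap V(H)$ is a subgraph of the corresponding color class in $G$, and subgraphs of forests are forests. Hence $va(H)\leq va(G)$. Applying this with $H=G$ and the ambient graph $G^{*}$ together with the previous corollary gives $va(G)\leq va(G^{*})\leq 3$.

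There is no real obstacle here: both ingredients are classical. The only point requiring any care is the extension step to a triangulation, but this is routine topology of plane graphs and in particular does not need to preserve simplicity beyond the standard check that one chooses chords inside a face between visible, non-adjacent vertices.
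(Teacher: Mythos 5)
Your argument is exactly the one the paper uses: it deduces the corollary from the triangulation case via the two facts that every planar graph is a subgraph of a (planar) triangulation and that $va(G')\leq va(G)$ whenever $G'\subseteq G$. Your write-up just fills in the routine details of those two classical facts, so the proposal is correct and essentially identical to the paper's proof.
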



Now let us focus on the class of $K_5$-minor-free graphs, which is a larger class than the one of planar graphs. Before showing the second main result of this paper, we present some additional useful lemmas.

\begin{lem}\cite{Wagner}\label{K5-minor-free-structure}
If $G$ is an edge-maximal $K_{5}$-minor-free graph, then $G=G_1\cup G_2$, where $G_1$ is an edge-maximal $K_{5}$-minor-free graph, $G_2$ is an edge-maximal planar graph (i.e., triangulation) or the Wagner graph (see Figure \ref{wgph}), and $G_1\cap G_2=K_2$ or $K_3$.
\end{lem}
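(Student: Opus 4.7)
The plan is to induct on $|V(G)|$, or equivalently to choose a minimum counterexample $G$, and split into cases according to the connectivity of $G$. The driving intuition is that a clique-sum decomposition $G=G_1\cup G_2$ with $G_1\cap G_2=K_2$ or $K_3$ exists precisely when $G$ has a vertex cut of size at most three, and the only obstruction to finding such a cut is that $G$ be $4$-connected; therefore one first reduces to the $4$-connected case and then shows that every $4$-connected edge-maximal $K_5$-minor-free graph is either a planar triangulation or the Wagner graph $V_8$.

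\textbf{Low-connectivity reduction.} First I would argue that $G$ must be $2$-connected: otherwise a cut vertex or a non-edge inside a small block could be augmented by an edge without creating a $K_5$-minor, contradicting edge-maximality (or the base case is trivial). Next, suppose $G$ has a vertex cut $S$ with $|S|\in\{2,3\}$. Using edge-maximality one shows that the vertices of $S$ already form a clique in $G$ (if some pair $u,v\in S$ is non-adjacent, adding $uv$ cannot create a $K_5$-minor, because any such minor would have to route branch sets through $S$ and thus through both sides of the cut, and we can route them separately in each side and recombine). Let $G_1$ and $G_2$ be the two $S$-sides, each taken together with the edges induced on $S$; then $G=G_1\cup G_2$ and $G_1\cap G_2\in\{K_2,K_3\}$. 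A standard branch-set argument shows that both $G_1$ and $G_2$ are themselves $K_5$-minor-free, and that they are edge-maximal with this property (any augmentation would, by the clique-sum, yield an augmentation of $G$). The induction hypothesis applied to the smaller side $G_2$ (say) gives the desired decomposition.

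\textbf{The $4$-connected case.} Now assume $G$ is $4$-connected, and the goal is to prove $G$ is a triangulation or isomorphic to the Wagner graph $V_8$; this is the main obstacle and the technical heart of the lemma. The approach I would take is to prove that $G$ is either planar or contains a subdivision/minor of $K_{3,3}$ with additional structure that forces it to be $V_8$. One route is via Kuratowski: if $G$ is planar, then being edge-maximal $K_5$-minor-free implies $G$ is edge-maximal planar, hence a triangulation. If $G$ is non-planar, then $G$ contains a $K_{3,3}$-minor (since $G$ has no $K_5$-minor). Pick a $K_{3,3}$-model with branch sets $A_1,A_2,A_3,B_1,B_2,B_3$ chosen to maximize the number of vertices inside the six branch sets; using $4$-connectivity one shows that this model is in fact a spanning topological $K_{3,3}$ and that every vertex of $G$ has degree exactly $3$ in a rigid bipartite-like pattern. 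A careful case analysis on how the remaining vertices of $G$ attach to the $K_{3,3}$-skeleton — exploiting that any extra chord would produce a $K_5$-minor — pins the structure down to exactly $V_8$. This argument is delicate and is precisely why the lemma is attributed to Wagner.

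\textbf{Assembly.} Combining the two cases: if $G$ is not $4$-connected, we obtain the clique-sum decomposition with $G_1$ edge-maximal $K_5$-minor-free (on fewer vertices, so covered by induction/the hypothesis of the lemma) and $G_2$ either admitting its own decomposition or being $4$-connected edge-maximal $K_5$-minor-free, and in the latter case the previous paragraph identifies $G_2$ as a triangulation or $V_8$. If $G$ is itself $4$-connected, take $G_2=G$ and let $G_1$ be a trivial $K_2$ or $K_3$ subgraph of $G$, which vacuously satisfies the conditions. The only nontrivial step is verifying that the clique-sum of edge-maximal $K_5$-minor-free graphs across $K_2$ or $K_3$ is again edge-maximal $K_5$-minor-free, which follows by the same branch-set routing argument used in the low-connectivity reduction.
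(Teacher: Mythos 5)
First, note that the paper does not prove this lemma at all: it is quoted (as \cite{Wagner}) from Wagner's 1937 structure theorem for $K_5$-minor-free graphs, so there is no in-paper argument to compare yours against. Your sketch follows the standard textbook strategy (reduce via small separators, then handle the highly connected case), but it contains a genuine error at its pivotal step. In the ``low-connectivity reduction'' you assert that any separator $S$ with $|S|\in\{2,3\}$ in an edge-maximal $K_5$-minor-free graph must already be a clique, because a $K_5$-minor created by adding a missing edge of $S$ could be rerouted through the other side of the cut. This is true for $|S|=2$ (one only needs a single connecting path in the other side), but it is false for $|S|=3$: the rerouting requires three internally disjoint paths in the other side realizing a triangle on $S$, i.e.\ a cycle through all three cut vertices, and such a cycle need not exist. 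The Wagner graph itself is the counterexample: it is edge-maximal $K_5$-minor-free, triangle-free, and has independent $3$-cuts (e.g.\ $\{v_1,v_4,v_6\}$ separates $v_5$, whose only neighbours are $v_1,v_4,v_6$); adding any such chord \emph{does} create a $K_5$-minor. So your reduction cannot dispose of all $3$-separators, and this is precisely the case from which the Wagner graph must emerge.

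Relatedly, you have mislocated $V_8$ in the case analysis: being $3$-regular, the Wagner graph is not $4$-connected, so in your ``$4$-connected case'' the correct (and only possible) conclusion is that $G$ is planar, hence a triangulation --- this is Wagner's theorem that every $4$-connected non-planar graph has a $K_5$-minor. The graph $V_8$ must instead be extracted in the $\kappa(G)=3$ subcase where some $3$-separator fails to be a clique, exactly the configuration your reduction wrongly claims cannot occur. Finally, the technical heart (showing that a $3$-connected, non-planar, edge-maximal $K_5$-minor-free graph with no separating triangle is isomorphic to $V_8$, via analysis of a maximal topological $K_{3,3}$) is only gestured at rather than carried out. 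As it stands the proposal is a plausible outline of the known proof with its central difficulty both misplaced and unproved; since the paper treats the lemma as a citation, the honest course is either to do the same or to reproduce Wagner's argument in full.
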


\begin{lem}\label{near-triangulation-outerface-K3}
Let $(G,\sigma)$ be a balanced signed graph and let $G$ be a near-triangulation. If there is a triangle $T:=xyzx$ in $G$ and a list $L(v)$ of colors to every vertex $v$ in $G$ so that $L(x)=\{\alpha\}$, $L(y)=\{\beta\}$, $L(z)=\{\gamma\}$,
$|L(v)|\geq 3$ for every $v\in V(G)\setminus  \{x,y,z\}$ and $T$ has a signed tree-coloring by coloring $x,y,z$ with $\alpha,\beta,\gamma$, respectively,
then there is a signed tree-coloring $c$ of $(G,\sigma)$ so that $c(v)\in L(v)$ for every $v\in V(G)$.
\end{lem}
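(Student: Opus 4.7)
The plan is to generalize the argument of Theorem \ref{near-triangulation-outerface} to allow three precolored vertices forming a triangle, with the balance hypothesis on $(G,\sigma)$ providing the decisive new ingredient. I would induct on $|V(G)|$, with base case $G = T$ handled immediately by the assumption that coloring $x,y,z$ with $\alpha,\beta,\gamma$ already gives a signed tree-coloring of $T$.

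For the inductive step, if $T$ is a separating triangle of $G$, I would split $G$ along $T$ into the closed interior $G_1$ (a near-triangulation with outer face $T$) and the closed exterior $G_2$ (a near-triangulation containing the original outer face $C$, with $T$ as one of its faces). Both pieces are strictly smaller, both inherit the precoloring of $T$ and the lists $L$, and $G_1 \cap G_2 = T = K_3$; the inductive hypothesis gives compatible signed tree-colorings of each, and Lemma \ref{comb-lem} combines them into a signed tree-coloring of $G$. If $T$ is not separating, so one side of $T$ contains no vertex of $G$ other than $x,y,z$, I would remove $z$ from $G$: let $x, u_1, \ldots, u_t, y$ be the cyclic neighbors of $z$ on the non-empty side, define $L^*(u_i) := L(u_i) \setminus \{\gamma\sigma(zu_i)\}$ (keeping $|L^*(u_i)|\geq 2$ since $|L(u_i)|\geq 3$), apply Theorem \ref{near-triangulation-outerface} to $G-z$ with $v_1=x$, $v_2=y$ to obtain a signed tree-coloring $c^*$ of $G-z$ with $c^*(x)=\alpha$ and $c^*(y)=\beta$, and then set $c(z)=\gamma$.

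The main obstacle is the forest verification for $G^c(|\gamma|,\pm)$ after reinserting $z$, and this is exactly where balance enters. For $i \neq |\gamma|$, $G^c(i,\pm) = G^{c^*}(i,\pm)$ is a forest by Theorem \ref{near-triangulation-outerface}. For $i = |\gamma|$, the list updates at the $u_i$'s rule out every edge $zu_i$, so the only candidate edges at $z$ are $zx$ and $zy$; if at most one is present, $z$ is a leaf in $G^c(|\gamma|,\pm)$ and the forest property is preserved. Otherwise both $zx$ and $zy$ are present, which forces $\alpha = \sigma(xz)\gamma$ and $\beta = \sigma(yz)\gamma$, and in particular $|\alpha|=|\beta|=|\gamma|$. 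Since $(G,\sigma)$ is balanced, the triangle $T$ is positive, so $\sigma(xy)\sigma(yz)\sigma(xz) = 1$; combining this with the two relations above gives $\alpha = \sigma(xy)\beta$, placing $xy$ in $G^c(|\gamma|,\pm)$ as well. But then all three edges of $T$ would lie in $G^c(|\gamma|,\pm)$, contradicting the hypothesis that coloring $x,y,z$ with $\alpha,\beta,\gamma$ is a signed tree-coloring of $T$. Hence the bad case cannot occur, $z$ is a leaf in $G^c(|\gamma|,\pm)$, and $c$ is a signed tree-coloring of $(G,\sigma)$ respecting $L$.
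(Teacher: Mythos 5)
Your proposal is correct and follows essentially the same route as the paper's proof: split along $T$ via Lemma \ref{comb-lem} when $T$ separates, otherwise delete $z$, shrink the lists of its interior neighbors, apply Theorem \ref{near-triangulation-outerface} with $v_1=x$, $v_2=y$, and use balance of $T$ to rule out the case where both $zx$ and $zy$ land in $G^c(|\gamma|,\pm)$. Your use of $\sigma(xy)\sigma(yz)\sigma(xz)=1$ to force $xy$ into the same subgraph is a slightly cleaner packaging of the paper's three-subcase sign analysis, but the argument is the same.
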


\begin{proof}
We prove it by the induction on the order of $G$.
If both the interior and exterior of the triangle $T:=xyzx$ contains vertices of $G$, then let $(G_1,\sigma)$ denote the graph induced by the vertices inside or on $T$, and let $(G_2,\sigma)$ denote the graph induced by the vertices outside or on $T$. Clearly, we can apply the induction hypothesis to both $G_1$ and $G_2$, and then obtain signed tree-colorings $c_1$ and $c_2$ of $(G_1,\sigma)$ and $(G_2,\sigma)$ so that $c(v)\in L(v)$ for every $v\in V(G_1)$ and $v\in V(G_2)$, respectively.

Combining $c_1$ with $c_2$, we obtain a signed tree-coloring $c$ of $(G,\sigma)$ so that $c(v)\in L(v)$ for every $v\in V(G)$ by Lemma \ref{comb-lem}.

Hence we assume, without loss of generality, that $T:=xyzx$ is the outer face of $G$. Let $x,u_1,u_2,\ldots,u_t,y$ be neighbors of $z$ in cyclic order around $z$. Since $G$ is a near-triangulation, $xu_1,u_1u_2,\ldots,u_{t-1}u_t,u_ty\in E(G)$. Color $z$ with $\gamma$ and define a list $L^*(v)$ of colors to every vertex $v$ in $G^*:=G-z$ so that $L^*(u_i)=L(u_i)\setminus \{\gamma\sigma(zu_i)\}$ for every $1\leq i\leq t$ and $L^*(v)=L(v)$ for every $v\in V(G^*)\setminus \{u_1,u_2,\ldots,u_t\}$. Since $L^*(x)=\{\alpha\},L^*(y)=\{\beta\}$, $|L^*(u_i)|\geq 3-1=2$ for every $1\leq i\leq t$ and $|L^*(v)|\geq 3$ for every $v\in V(G^*)\setminus \{u_1,u_2,\ldots,u_t\}$,
there is a signed tree-coloring $c^*$ of $(G^*,\sigma)$ so that $c^*(v)\in L^*(v)$ for every $v\in V(G^*)$ by Theorem \ref{near-triangulation-outerface}. Combining $c^*$ with the color on $z$, we obtain a coloring $c$ of $(G,\sigma)$ so that $c(v)\in L(v)$ for every $v\in V(G)$. Next, we show that $c$ is a signed tree-coloring of $(G,\sigma)$.

In fact, $G^{c}(\xi,\pm)$ is $G^{c^*}(\xi,\pm)$ for any $\xi\neq \pm \gamma$, and thus is a forest, because $c^*$ is a signed tree-coloring. Hence we just claim that $G^{c}(\gamma,\pm)$ is a forest.

If $\gamma\neq \alpha\sigma(zx)$ or $\gamma\neq \beta\sigma(yz)$, then $G^{c}(\gamma,\pm)$ is a forest, since
$r\neq c^*(u_i)\sigma(zu_i)$ for every $1\leq i\leq t$. If $\gamma=\alpha\sigma(zx)=\beta\sigma(yz)$, then by symmetry we consider three cases according to the fact that $T$ is a positive cycle.
If $\sigma(xy)=\sigma(yz)=\sigma(zx)=1$, then $\alpha=\beta=\gamma$, and $T=T^c(\gamma,\pm)$. If $\sigma(xy)=1$ and $\sigma(yz)=\sigma(zx)=-1$, then $\alpha=\beta=-\gamma$, and $T=T^c(-\gamma,\pm)$. If $\sigma(yz)=1$ and $\sigma(xy)=\sigma(zx)=-1$, then $\alpha=-\beta=-\gamma$, and $T=T^c(\gamma,\pm)$. In each case
$T$ does not have the signed tree-coloring as mentioned in the lemma, a contradiction.
\end{proof}

\begin{figure}
  \centering
  \includegraphics[width=5cm]{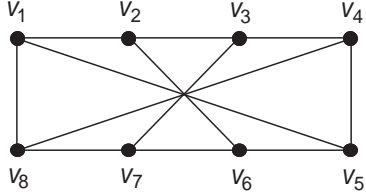}
  \caption{Wagner graph}\label{wgph}
\end{figure}

\begin{lem}\label{wagnerextended}
  Let $(W,\sigma)$ be a signed Wagner graph (see Figure \ref{wgph}). If there is a list $L(v_i)$ of colors to every vertex $v_i$ in $W$ so that $L(v_i)=\{\alpha\}$ and $L(v_j)=\{\beta\}$ for en edge $v_iv_j\in E(G)$, and
  $|L(v_k)|\geq 3$ for every $k\neq i,j$, then there is a signed tree-coloring $c$ of $(W,\sigma)$ so that $c(v_k)\in L(v_k)$ for every $1\leq k\leq 8$.
\end{lem}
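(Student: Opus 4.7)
The plan is to extend the given precoloring vertex-by-vertex by a simple greedy argument. I would process the six uncolored vertices in any order; when coloring a vertex $v$, I would select $\gamma\in L(v)$ so that at most one of the previously colored neighbors of $v$ has its color in the class $\{|\gamma|,-|\gamma|\}$. The key observation is that among the at most three previously colored neighbors of $v$, no two distinct absolute values can each occur at least twice, since $2+2>3$; hence at most one value $i$ is ``dangerous''. Because $L(v)$ consists of at least three distinct integers and only two of them (namely $i$ and $-i$) can have absolute value $i$, the list must contain some $\gamma$ with $|\gamma|\neq i$, which is the desired safe choice.

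With such a safe $\gamma$ chosen for $v$, coloring $v$ adds exactly the vertex $v$ and at most one new incident edge to $G^{c}(|\gamma|,\pm)$; this cannot close a cycle in the previously-forest $G^{c}(|\gamma|,\pm)$, while the subgraphs $G^{c}(j,\pm)$ for $j\neq|\gamma|$ are left untouched. By induction on the number of colored vertices, every $G^{c}(i,\pm)$ remains a forest. The base case, when only $v_i$ and $v_j$ are colored, is immediate: each relevant $G^{c}(\cdot,\pm)$ has at most two vertices and at most one edge.

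The main obstacle turns out to be essentially absent: the whole argument rests on the counting observation above together with the $3$-regularity of the Wagner graph. No further structural feature of $W$ is used, and neither Theorem~\ref{switching} nor any balancedness assumption on $\sigma$ need be invoked. One could of course first apply Theorem~\ref{switching} to normalize $\sigma$, since switching a vertex $v_k$ merely negates $L(v_k)$ and thus preserves both its size and the single-element form of the precolored lists, but this step offers no simplification for the greedy coloring. The only care required is to verify the elementary counting step at each vertex, which the hypothesis $|L(v_k)|\geq 3$ makes automatic.
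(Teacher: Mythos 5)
Your proposal is correct and follows essentially the same greedy strategy as the paper's proof: colour the six remaining vertices one at a time so that each newly coloured vertex contributes at most one edge to its colour-class subgraph $G^{c}(|\gamma|,\pm)$, which therefore stays a forest. Your counting observation (among at most three coloured neighbours only one absolute value can occur twice, so a safe $\gamma$ exists in a list of size $3$) merely lets you take the vertices in an arbitrary order and avoid the paper's explicit ordering and its case distinction on the position of the precoloured edge $v_iv_j$; the underlying mechanism is identical.
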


\begin{proof}
By symmetry, considering two speical cases here is enough. First, if $i=1$ and $j=2$, then
let $c(v_1)=\alpha$, $c(v_2)=\beta$, and choose $c(v_3)\in L(v_3)\setminus \{c(v_2)\sigma(v_2v_3)\}$, $c(v_4)\in L(v_4)\setminus \{c(v_3)\sigma(v_3v_4)\}$, $c(v_5)\in L(v_5)\setminus \{c(v_1)\sigma(v_1v_5),c(v_4)\sigma(v_4v_5)\}$, $c(v_6)\in L(v_6)\setminus \{c(v_2)\sigma(v_2v_6),c(v_5)\sigma(v_5v_6)\}$, $c(v_7)\in L(v_7)\setminus \{c(v_3)\sigma(v_3v_7),c(v_6)\sigma(v_6v_7)\}$, $c(v_8)\in L(v_8)\setminus \{c(v_4)\sigma(v_4v_8),c(v_7)\sigma(v_7v_8)\}$.
It is easy to check that the resulting coloring $c$ is a signed tree-coloring as required. Note that we may have $\beta=\alpha\sigma(v_1v_2)$ or $c(v_8)=\alpha\sigma(v_1v_8)$ by the above choice of coloring but it does not matter at all.
Second, if $i=1$ and $j=5$, then we can similarly solve it, so omit the proof.
\end{proof}

\begin{thm}\label{thm-K5-minor-free}
If $(G,\sigma)$ is an edge-maximal $K_5$-minor-free graph with balanced signature and a list $L(v)$ of colors is given to every vertex $v$ in $G$ so that $|L(v)|\geq 3$,
then there is a signed tree-coloring $c$ of $(G,\sigma)$ so that $c(v)\in L(v)$ for every $v\in V(G)$. Specially, if we choose $L(v)=M_3=\{-1,0,1\}$ for every $v\in V(G)$, then the previous coloring is a signed tree-3-coloring of $(G,\sigma)$ and thus the signed vertex arboricity of $(G,\sigma)$ is at most 3, i.e., $va(G,\sigma)\leq 3$.
\end{thm}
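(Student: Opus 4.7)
The plan is to prove the stronger list-coloring statement by induction on $|V(G)|$, using Wagner's decomposition (Lemma~\ref{K5-minor-free-structure}) to reduce to the two atomic classes---triangulations and the Wagner graph---that have already been handled by Theorem~\ref{near-triangulation-outerface}, Lemma~\ref{near-triangulation-outerface-K3}, and Lemma~\ref{wagnerextended}. The ``Specially'' part of the theorem then follows immediately by specialising every $L(v)$ to $M_3=\{-1,0,1\}$.

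\emph{Base cases.} If $G$ is itself a triangulation, pick any edge $v_1v_2$ on the outer face, shrink $L(v_1), L(v_2)$ to arbitrary singletons $\{\alpha\}\subseteq L(v_1)$ and $\{\beta\}\subseteq L(v_2)$, and apply Theorem~\ref{near-triangulation-outerface}; the other outer-face lists still have size $\geq 2$ and the interior lists size $\geq 3$. If $G$ is the Wagner graph, pick any edge $v_iv_j$, shrink $L(v_i), L(v_j)$ to singletons, and apply Lemma~\ref{wagnerextended}.

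\emph{Inductive step.} Otherwise Lemma~\ref{K5-minor-free-structure} gives $G = G_1 \cup G_2$ with both $G_i$ proper subgraphs of $G$, $G_1$ edge-maximal $K_5$-minor-free, $G_2$ a triangulation or the Wagner graph, and $G_1 \cap G_2 \in \{K_2, K_3\}$. Both $(G_i, \sigma|_{G_i})$ are balanced since every cycle in $G_i$ is a cycle in $G$. Apply the induction hypothesis to $(G_1, \sigma|_{G_1})$ with the given lists to obtain a signed tree-coloring $c_1$ of $G_1$. Then replace $L(v)$ by $\{c_1(v)\}$ for each $v \in V(G_1 \cap G_2)$, retaining the original size-$\geq 3$ lists on the remaining vertices of $G_2$. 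If $G_2$ is a triangulation and $G_1 \cap G_2 = xy$, embed $G_2$ with $xy$ bounding the outer face and apply Theorem~\ref{near-triangulation-outerface}; if $G_2$ is a triangulation and $G_1 \cap G_2 = xyz$, apply Lemma~\ref{near-triangulation-outerface-K3}, whose triangle hypothesis holds because the restriction of any signed tree-coloring to an induced subgraph is again a signed tree-coloring; if $G_2$ is the Wagner graph, the intersection is forced to be $K_2$ (the Wagner graph is triangle-free) and Lemma~\ref{wagnerextended} applies. The resulting coloring $c_2$ agrees with $c_1$ on $G_1 \cap G_2$, so Lemma~\ref{comb-lem} glues them into a signed tree-coloring of $(G, \sigma)$ respecting $L$.

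\emph{Main obstacle.} Since every substantive ingredient has been isolated into a preceding lemma, what remains is bookkeeping rather than a real combinatorial difficulty. The points that need explicit attention are (i) that the shared edge $xy$ can always be placed on the outer face of the triangulation $G_2$---this holds because in a plane triangulation every edge bounds some face and we may reembed to make that face outer; (ii) that balance is inherited by $G_1$ and $G_2$; and (iii) that the agreement hypothesis of Lemma~\ref{comb-lem} is met by construction. The only structural choice during the proof is deciding which of Theorem~\ref{near-triangulation-outerface}, Lemma~\ref{near-triangulation-outerface-K3}, or Lemma~\ref{wagnerextended} to invoke, and this is dictated by the decomposition itself.
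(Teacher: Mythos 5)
Your proposal is correct and follows essentially the same route as the paper: induction on $|V(G)|$ via Wagner's decomposition, coloring $G_1$ by the induction hypothesis, extending to $G_2$ by Theorem~\ref{near-triangulation-outerface}, Lemma~\ref{near-triangulation-outerface-K3}, or Lemma~\ref{wagnerextended}, and gluing with Lemma~\ref{comb-lem}. In fact you are somewhat more careful than the paper, which states no explicit base case and, in its $H=K_2$ case, silently omits the subcase where $G_2$ is the Wagner graph (so that Lemma~\ref{wagnerextended} is never actually invoked there); your version fills both of these small gaps.
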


\begin{proof}
We prove it by the induction on the order of $(G,\sigma)$. By Lemma \ref{K5-minor-free-structure}, $G=G_1\cup G_2$, where $G_1$ is an edge-maximal $K_{5}$-minor-free graph, $G_2$ is an edge-maximal planar graph (i.e., triangulation) or the Wagner graph, and $H:=G_1\cap G_2=K_2$ or $K_3$.

If $H=K_2$, then let $V(H)=\{x,y\}$. Since $G_1$ is an edge-maximal $K_{5}$-minor-free graph with smaller order, by the induction hypothesis, $G_1$ has a signed tree-coloring $c_1$ such that $c_1(v)\in L(v)$ for every $v\in V(G_1)$. Now, the vertices $x$ and $y$ have been colored, and then by Theorem \ref{near-triangulation-outerface}, there is a signed tree-coloring $c_2$ of $G_2$ such that $c_2(v)\in L(v)$ for every $v\in V(G_2)$ and $c_2(x)=c_1(x)$, $c_2(y)=c_1(y)$.
Combining $c_1$ with $c_2$, we obtain a signed tree-coloring $c$ of $G$ such that $c(v)\in L(v)$ for every $v\in V(G)$ by Lemma \ref{comb-lem}.

If $H=K_3$, then $G_2$ is an edge-maximal planar graph (i.e., triangulation), because Wagner graph does not contain triangles.
Let $V(H)=\{x,y,z\}$. Since $G_1$ is an edge-maximal $K_{5}$-minor-free graph with smaller order, by the induction hypothesis, $G_1$ has a signed tree-coloring $c_1$ such that $c_1(v)\in L(v)$ for every $v\in V(G_1)$. Now, the vertices $x,y$ and $z$ have been colored so that $T$ admits a signed tree-coloring, thus
by Lemma \ref{near-triangulation-outerface-K3}, there is a signed tree-coloring $c_2$ of $G_2$ such that $c_2(v)\in L(v)$ for every $v\in V(G_2)$ and $c_2(x)=c_1(x)$, $c_2(y)=c_1(y)$, $c_2(z)=c_1(z)$.
Combining $c_1$ with $c_2$, we obtain a signed tree-coloring $c$ of $G$ such that $c(v)\in L(v)$ for every $v\in V(G)$ by Lemma \ref{comb-lem}.
\end{proof}

Note that all-positive signed graph is balanced and its signed vertex arboricity is the same with its (unsigned) vertex arboricity, and that $va(G')\leq va(G)$ if $G'$ is a subgraph of an unsigned graph $G$.
Hence Theorem \ref{thm-K5-minor-free} directly deduces the following corollary.

\begin{cor}
If $G$ is a $K_5$-minor-free graph, then $va(G)\leq 3$.
\end{cor}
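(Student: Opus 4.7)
The plan is to reduce the unsigned corollary to the signed theorem just proved, via two standard moves: signing everything positively, and embedding into an edge-maximal host graph.

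First, I would observe that assigning $\sigma(e)=+1$ to every edge of $G$ produces a signed graph $(G,\sigma)$ whose cycles all have zero negative edges, so the signature is balanced. Moreover, under an all-positive signature the conditions defining $G^c(i,\pm)$ collapse to the usual same-color condition, so a signed tree-$n$-coloring of $(G,+)$ is nothing but an (unsigned) tree-$n$-coloring of $G$; consequently $va(G)=va(G,+)$.

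Next, since $G$ is $K_5$-minor-free, I would add edges one by one as long as no $K_5$-minor is created. This process terminates at an edge-maximal $K_5$-minor-free supergraph $G^*$ containing $G$ as a spanning subgraph. Extending the all-positive signature to $G^*$ gives a balanced signed graph $(G^*,+)$, and Theorem \ref{thm-K5-minor-free} applies to yield $va(G^*,+)\leq 3$.

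Finally, since removing edges never destroys tree-ness of a color class, we have $va(G')\leq va(G)$ whenever $G'\subseteq G$ (in the unsigned sense). Combining this with the two observations above gives
\[
va(G)=va(G,+)\leq va(G^*,+)\leq 3,
\]
which is the desired bound. There is no real obstacle here: all the work has already been done in Theorem \ref{thm-K5-minor-free}; the only thing to check carefully is the edge-maximal extension, which is immediate since $K_5$-minor-freeness is closed under edge deletion and the class of spanning supergraphs of $G$ on the same vertex set is finite.
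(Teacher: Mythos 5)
Your proposal is correct and is essentially the paper's own argument: the paper likewise signs all edges positively (noting this is balanced and that the signed and unsigned vertex arboricities then coincide), embeds $G$ in an edge-maximal $K_5$-minor-free graph, applies Theorem \ref{thm-K5-minor-free}, and uses monotonicity of $va$ under subgraphs. You have merely spelled out the edge-maximal extension step a bit more explicitly than the paper does.
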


Again, this corollary generalizes Corollary \ref{va-planar} and thus the bound 3 for $va(G)$ here cannot be lowered (see \cite{CKW} for a planar graph with vertex arboricity exactly 3). Therefore, the bound 3 for $va(G,\sigma)$ in Theorem \ref{thm-K5-minor-free} is sharp.

\end{document}